\documentclass[a4paper,12pt]{amsart}
\usepackage[margin=2.8cm]{geometry}
 \usepackage{amsmath,amscd}
\parindent=0cm
\usepackage{fullpage}
\usepackage[applemac]{inputenc}
\usepackage{eurosym}
\newtheorem{theorem}{Theorem}[section]
\newtheorem{rem} [theorem] {Remark}

\newtheorem{definition}[theorem]{Definition}
\newcommand{\ovprt}{\overline{\partial}}
\newcommand{\ovli}{\overline}
\newcommand{\dquer}{\overline\partial}
\newcommand{\dbar}{\overline\partial}

\pagestyle{plain}
\numberwithin{equation}{section}

\title{ Sobolev spaces for the weighted  $\ovprt$-Neumann operator}

\author{ Friedrich Haslinger}

\thanks{Partially supported by the FWF-grant  P28154.}

 \address{ F. Haslinger: Fakult\"at  f\"ur Mathematik, Universit\"at Wien,
Oskar-Morgenstern-Platz 1, A-1090 Wien, Austria}
\email{ friedrich.haslinger@univie.ac.at}

\keywords{weighted $\dquer$-Neumann operator, Sobolev spaces, compactness}
\subjclass[2010] {Primary 32W05; Secondary  35N15, 46E35}

\begin{document}

\maketitle

\begin{abstract} ~\ 
We discuss compactness of the $\dbar$-Neumann operator in the setting of weighted $L^2$-spaces on $\mathbb C^n.$
In addition we describe an approach to obtain the  compactness estimates for the $\dbar$-Neumann operator. For this purpose we have to define appropriate weighted Sobolev spaces and prove an appropriate Rellich - Kondrachov lemma.
  
\end{abstract}

\section{Introduction}

Let  $\Omega $ be a bounded open set in $\mathbb{R}^n,$ and $k$  a nonnegative integer. We denote by
$W^k(\Omega)$
 the Sobolev space
$$W^k(\Omega) = \{ f\in L^2(\Omega ) \, : \, \partial^\alpha f \in L^2(\Omega ) ,  \, |\alpha |\le k \},$$
where the derivatives are taken in the sense of distributions and endow the space with the norm
\begin{equation*}
\|f\|_{k,\Omega} = \left ( \sum_{|\alpha |\le k} \int_\Omega |\partial^\alpha f |^2 \,d\lambda \right )^{1/2},
\end{equation*}
where $\alpha =(\alpha_1, \dots ,\alpha_n)$ is a multiindex , $|\alpha |=\sum_{j=1}^n \alpha_j$ and 
$$\partial^\alpha f =\frac{\partial^{|\alpha |}f}{\partial x_1^{\alpha_1}\dots \partial x_n^{\alpha_n}}.$$
$W^k(\Omega)$ is a Hilbert space.
 If $\Omega \subset \mathbb{R}^n \, , \, n\ge 2,$  is a bounded domain with a $\mathcal{C}^1$ boundary, the Rellich-Kondrachov lemma says that for $n>2$ one has
$$W^1(\Omega ) \subset L^r(\Omega) \ , \ r\in [1, 2n/(n-2))$$
and that the imbedding is also compact; for $n=2$ one can take $r\in [1,\infty)$ (see for instance \cite{AF}), in particular, there exists a constant $C_r$ such that 
\begin{equation}\label{eq: sob1}
\|f\|_r \le C_r \|f\|_{1,\Omega},
\end{equation}
for each $f\in W^1(\Omega),$ where
$$\|f\|_r =  \left ( \int_\Omega |f|^r \, d\lambda \right )^{1/r}.$$
Now let $\Omega \subseteq \mathbb{C}^n ( \cong \mathbb{R}^{2n} )$ be a smoothly bounded pseudoconvex domain.
 We consider the 
$\ovprt $-complex 
\begin{equation}\label{eq: dbarcomplex1}
L^2(\Omega )\overset{\ovprt }\longrightarrow L^2_{(0,1)}(\Omega)
\overset{\ovprt }\longrightarrow \dots \overset{\ovprt }\longrightarrow
L^2_{(0,n)}(\Omega)\overset{\ovprt }\longrightarrow 0\, ,  
\end{equation}
where $L^2_{(0,q)}(\Omega)$ denotes the space of $(0,q)$-forms on $\Omega$ with
coefficients in $L^2(\Omega).$ The $\ovprt $-operator on $(0,q)$-forms is given by
\begin{equation}\label{eq: deriv1}
\ovprt \left ( \sum_J\,^{'}  a_J \, d\ovli z_J \right )=
\sum_{j=1}^n \sum_J\,^{'}\  \frac{\partial a_J}{\partial \ovli z_j}d\ovli z_j\wedge
d\ovli z_J,
\end{equation}
where $\sum  ^{'} $ means that the sum is only taken over strictly increasing multi-indices $J.$

The derivatives are taken in the sense of distributions, and the domain of $\ovprt $
consists of those $(0,q)$-forms for which the right hand side belongs to
$L^2_{(0,q+1)}(\Omega).$ So $\ovprt $ is a densely defined closed operator\index{closed operator}, and
therefore has an adjoint operator from $L^2_{(0,q+1)}(\Omega)$ into
$L^2_{(0,q)}(\Omega)$ denoted by $\ovprt ^* .$ 

We consider the 
$\ovprt $-complex \index{$\ovprt $-complex}
\begin{equation}\label{eq: complex0}
L^2_{(0,q-1)}(\Omega )\underset{\underset{\ovprt^* }
\longleftarrow}{\overset{\ovprt }
{\longrightarrow}} L^2_{(0,q)}(\Omega )
\underset{\underset{\ovprt^* }
\longleftarrow}{\overset{\ovprt }
{\longrightarrow}} L^2_{(0,q+1)}(\Omega ),
\end{equation}
for $1\le q \le n-1.$

We remark that a $(0,q+1)$-form $u=\sum_{J}^{'} u_J\,d\ovli z_J$ belongs to $\mathcal{C}^\infty_{(0,q+1)}(\ovli \Omega ) \cap {\text{dom}}(\ovprt ^*)$ if and only if
\begin{equation}\label{eq: dom2}
\sum_{k=1}^n u_{kK} \, \frac{\partial r}{\partial z_k} =0
\end{equation}
on $b\Omega$ for all $K$ with $|K|=q,$ where $r$ is a defining function of  $\Omega$ with $|\nabla r(z)|=1$ on the boundary $b\Omega.$ (see for instance \cite{Str})

The complex Laplacian 
$\Box = \ovprt \, \ovprt ^* + \ovprt ^*\,  \ovprt $\index{$\Box$},  defined on the domain
$${\text{dom}}(\Box) = \{ u\in L^2_{(0,q)}(\Omega ) : u\in {\text{dom}}(\ovprt ) \cap  {\text{dom}}(\ovprt^*) ,  \ovprt u\in  {\text{dom}}(\ovprt^*)  ,  \ovprt^* u \in  {\text{dom}}(\ovprt ) \}$$
acts as
an unbounded, densely defined, closed and self-adjoint  operator\index{self-adjoint operator} on 
$L^2_{(0,q)}(\Omega ),$ for $ 1\le q \le n,$ which means that  $\Box = \Box^*$ and ${\text{dom}}(\Box ) =  {\text{dom}}(\Box^*).$  

Note that
\begin{equation}\label{eq: diri1}
(\Box u,u)=( \ovprt \, \ovprt ^* u+ \ovprt ^* \, \ovprt u,u)=\| \ovprt u \|^2 + \| \ovprt ^* u \|^2,
\end{equation}
for $u\in {\text{dom}}(\Box ).$

If $\Omega $ is
  a smoothly bounded pseudoconvex domain in $\mathbb{C}^n,$
  the so-called basic estimate says that
  
 \begin{equation}\label{eq: diri2}
 \| \ovprt u \|^2 + \| \ovprt ^* u \|^2 \ge c \, \|u\|^2,  
 \end{equation}
  for each $u\in {\text{dom}}(\ovprt) \cap {\text{dom}}(\ovprt^* ) , \ c>0.$

 This estimate implies that  $ \Box :  {\text{dom}}(\Box) \longrightarrow L^2_{(0,q)}(\Omega )$\index{$\Box$}
 is bijective and has a bounded inverse 
 $$N_{(0,q)}:  L^2_{(0,q)}(\Omega ) \longrightarrow {\text{dom}}(\Box). $$
 $N_{(0,q)}$ is called $\ovprt$-Neumann operator\index{$\ovprt$-Neumann operator}.
In addition 
  \begin{equation}\label{eq: cont5}
 \|N_{(0,q)} u\| \le \frac{1}{c} \, \|u\|.
 \end{equation}
Hence the $\ovprt$-Neumann operator $N_{(0,q)}$ is continuous from $L^2_{(0,q)}(\Omega )$ into itself. Compactness of the $\dbar$-Neumann operator is relevant for a number of circumstances (\cite{Str}). From the point of view of the $L^2$-Sobolev theory of the $\dbar$-Neumann operator, an important application of compactness is that it implies global regularity. Kohn and Nirenberg (\cite{KN}) proved that compactness of $N_{(0,q)}$ on $L^2_{(0,q)}(\Omega )$ implies compactness (in particular, continuity) of $N_{(0,q)}$ from the Sobolev spaces $W^s_{(0,q)}(\Omega )$ into itself for all $s\ge 0,$ see also \cite{Str}.  For this result the Rellich - Kondrachov lemma is important, it holds as $\Omega$ is a bounded domain.

\vskip 0.3 cm

The aim of this paper is to study similar properties for the weighted $\dbar$-Neumann operator on $\mathbb C^n.$

Let $\varphi : \mathbb{C}^n \longrightarrow \mathbb{R}$ be a plurisubharmonic $\mathcal{C}^2$-function and
let 
$$L^2( \mathbb{C}^n, e^{-\varphi}) = \{ g: \mathbb{C}^n \longrightarrow \mathbb{C} \ {\text{measurable}} \,:
\|g\|^2_\varphi =(g,g)_\varphi = \int_{\mathbb{C}^n} |g|^2 e^{-\varphi}\, d\lambda < \infty \}.$$
Let $1\le q \le n$ and
$$f= \sum_{|J|=q}\, ' \, f_J\,d\overline z_J ,$$
where the sum is taken only over increasing multiindices $J=(j_1, \dots , j_q)$ and $d\overline z_J = d\overline z_{j_1} \wedge \dots \wedge d\overline z_{j_q}$ and $f_J\in L^2(\mathbb C^n, e^{-\varphi}).$

We write $f\in L^2_{(0,q)}(\mathbb C^n, e^{-\varphi})$ and define 
$$\ovprt f = \sum_{|J|=q}\, ' \, \sum_{j=1}^n \frac{\partial f_J}{\partial \overline z_j}\, d\overline z_j \wedge d\overline z_J$$
for $1\le q \le n-1$ and 
$${\text{dom}}(\ovprt) = \{ f \in L^2_{(0,q)}(\mathbb C^n, e^{-\varphi}) \, : \, \ovprt f \in L^2_{(0,q+1)}(\mathbb C^n, e^{-\varphi}) \} .$$
In this way $\ovprt$ becomes a densely defined closed operator and its adjoint $\ovprt^*_\varphi$ depends on the weight $\varphi.$

We consider the weighted $\ovprt$-complex
\begin{equation*}
L^2_{(0,q-1)}(\mathbb{C}^n , e^{-\varphi} )\underset{\underset{\ovprt_\varphi^* }
\longleftarrow}{\overset{\ovprt }
{\longrightarrow}} L^2_{(0,q)}(\mathbb{C}^n , e^{-\varphi} )
\underset{\underset{\ovprt_\varphi^* }
\longleftarrow}{\overset{\ovprt }
{\longrightarrow}} L^2_{(0,q+1)}(\mathbb{C}^n , e^{-\varphi} )
\end{equation*}
and we set 
$$\Box_{\varphi}^{(0,q)}= \ovprt\, \ovprt_\varphi^* +	 \ovprt_\varphi^* \ovprt,$$
where
$${\text{dom}}(\Box_\varphi^{(0,q)}) = \{ u \in {\text{dom}}(\ovprt) \cap {\text{dom}}(\ovprt^*_\varphi): \ovprt u \in  {\text{dom}}(\ovprt^*_\varphi), \ovprt^*_\varphi u \in  {\text{dom}}(\ovprt)\}.$$

It turns out that $\Box_{\varphi}^{(0,q)}$ is a densely defined, non-negative self-adjoint operator, which has a uniquely determined self-adjoint square root $(\Box_{\varphi}^{(0,q)})^{1/2}.$ The domain of
$(\Box_{\varphi}^{(0,q)})^{1/2})$ coincides with ${\text{dom}}(\ovprt) \cap {\text{dom}}(\ovprt^*_\varphi),$ which is also the domain of the corresponding quadratic form
$$Q_\varphi (u,v):=(\ovprt u, \ovprt v)_\varphi + (\ovprt^*_\varphi u, \ovprt^*_\varphi v)_\varphi,$$
see for instance \cite{Dav}. 

Next we consider the Levi matrix 
$$M_\varphi = \left ( \frac{\partial^2 \varphi}{\partial z_j \partial \overline z_k} \right )_{j,k=1}^n$$

and suppose that the lowest eigenvalue $\mu_\varphi$ of $M_\varphi$ satisfies
\begin{equation} \label{perss}
\liminf_{|z| \to \infty} \mu_\varphi (z)>0.
\end{equation}

\eqref{perss} implies that $\Box_\varphi ^{(0,1)}$ is injective and that the bottom of the essential spectrum
$\sigma_e(\Box_\varphi ^{(0,1)})$ is positive (Persson's Theorem), see \cite{HaHe}.
Now it follows that $\Box_\varphi ^{(0,1)}$ has a bounded inverse,  which we denote by 
$$N_\varphi ^{(0,1)}: L^2_{(0,1)}(\mathbb{C}^n , e^{-\varphi} ) \longrightarrow L^2_{(0,1)}(\mathbb{C}^n , e^{-\varphi} ).$$
Using the square root of  $N_\varphi ^{(0,1)}$ we get the basic estimates
\begin{equation}\label{coerc}
\|u\|^2_\varphi \le C ( \|\ovprt u \|^2_\varphi + \|\ovprt^*_\varphi  u\|^2_\varphi ),
\end{equation}
for all $u\in {\text{dom}}(\ovprt) \cap {\text{dom}}(\ovprt^*_\varphi).$

Now we will study compactness of the weighted  $\dbar$-Neumann operator $N_\varphi ^{(0,1)}.$ For this purpose we will use  the description of compact subsets in $L^2$-spaces, as it is done in \cite{Has10} Chapter 11, to derive  a sufficient condition for compactness in terms of the weight function. It turns out that compactness of the $\dbar$-Neumann operator $N_\varphi ^{(0,1)}$ is equivalent to compactness of the embedding of a certain complex Sobolev space into $L^2_{(0,1)}(\mathbb{C}^n , e^{-\varphi}).$

\begin{definition} Let

$$\mathcal{W}^{Q_\varphi}= \{ u\in L^2_{(0,1)}(\mathbb{C}^n , e^{-\varphi}) \ : \  u\in {\text{dom}}(\ovprt) \cap {\text{dom}}(\ovprt^*_\varphi) \}$$ with norm
\begin{equation}\label{com9}
\| u\|_{Q_\varphi}  =  (\| \ovprt u \|^2_\varphi + \| \ovprt_\varphi ^* u\|^2_\varphi )^{1/2}.
\end{equation}
\end{definition}

So $\mathcal{W}^{Q_\varphi}$ is the form domain  of $Q_\varphi .$

\begin{theorem}\label{sec: embed}Suppose that the weight function  $\varphi$ is  plurisubharmonic  and that the lowest eigenvalue $\mu_{\varphi}$ of the Levi - matrix $M_{\varphi }$ satisfies
\begin{equation}\label{eq: com10}
\lim_{|z|\rightarrow \infty}\mu_\varphi(z)  = +\infty\, . 
\end{equation}
Then the embedding 
\begin{equation}\label{eq: com11}
j_\varphi : \mathcal{W}^{Q_\varphi } \hookrightarrow  L^2_{(0,1)}(\mathbb{C}^n, e^{-\varphi} )
\end{equation}
 is compact. Consequently, the $\ovprt$-Neumann operator $N_\varphi^{(0,1)}$ is compact.
\end{theorem}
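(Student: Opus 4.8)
The plan is to exploit the abstract equivalence between compactness of $N_\varphi^{(0,1)}$ and compactness of the embedding $j_\varphi$ in \eqref{eq: com11}, and to establish the latter directly by verifying the Fr\'echet--Kolmogorov--Riesz criterion for precompactness of a bounded family in $L^2_{(0,1)}(\mathbb C^n,e^{-\varphi})$, in the form recalled in \cite{Has10}, Chapter 11. The starting point is the weighted Kohn--Morrey--H\"ormander identity: for $u=\sum_j u_j\,d\ovli z_j$ in a dense class of smooth compactly supported forms (extended to $\mathrm{dom}(\ovprt)\cap\mathrm{dom}(\ovprt^*_\varphi)$ by the usual density argument), and with no boundary term since we work on all of $\mathbb C^n$,
\[
\|\ovprt u\|^2_\varphi+\|\ovprt^*_\varphi u\|^2_\varphi
=\int_{\mathbb C^n}\sum_{j,k}\frac{\partial^2\varphi}{\partial z_j\partial\ovli z_k}\,u_j\ovli{u_k}\,e^{-\varphi}\,d\lambda
+\sum_{j,k}\int_{\mathbb C^n}\Big|\frac{\partial u_j}{\partial\ovli z_k}\Big|^2 e^{-\varphi}\,d\lambda .
\]
Bounding the Levi term from below by the lowest eigenvalue yields $\|u\|_{Q_\varphi}^2\ge\int_{\mathbb C^n}\mu_\varphi|u|^2e^{-\varphi}\,d\lambda+\sum_{j,k}\|\partial u_j/\partial\ovli z_k\|^2_\varphi$, which is the estimate driving both steps below.

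The tail estimate is then immediate from \eqref{eq: com10}. Given $\varepsilon>0$ choose $R$ with $\mu_\varphi(z)\ge 1/\varepsilon$ for $|z|\ge R$; then every $u$ with $\|u\|_{Q_\varphi}\le 1$ satisfies $\int_{|z|\ge R}|u|^2e^{-\varphi}\,d\lambda\le\varepsilon\int_{|z|\ge R}\mu_\varphi|u|^2e^{-\varphi}\,d\lambda\le\varepsilon$, so the unit ball of $\mathcal W^{Q_\varphi}$ has uniformly small mass near infinity.

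The main obstacle is the local precompactness on a ball $B_R$, because the identity controls only the \emph{antiholomorphic} derivatives $\partial u_j/\partial\ovli z_k$, whereas the classical Rellich--Kondrachov lemma needs the full gradient. To recover the holomorphic derivatives on a bounded region I would use the per-variable weighted integration-by-parts identity $\|\delta_k v\|^2_\varphi=\|\partial v/\partial\ovli z_k\|^2_\varphi+\int_{\mathbb C^n}\varphi_{k\ovli k}|v|^2e^{-\varphi}\,d\lambda$, where $\delta_k=-\partial/\partial z_k+\partial\varphi/\partial z_k$ is the formal adjoint of $\partial/\partial\ovli z_k$ in $L^2(e^{-\varphi})$ and $\varphi_{k\ovli k}=\partial^2\varphi/\partial z_k\partial\ovli z_k$. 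Since $\delta_k v=-\partial v/\partial z_k+(\partial\varphi/\partial z_k)\,v$ and $\varphi\in\mathcal C^2$, the coefficients $\partial\varphi/\partial z_k$ and $\varphi_{k\ovli k}$ are bounded on $B_{R+1}$, whence $\|\partial v/\partial z_k\|^2_\varphi\le C_R(\|\partial v/\partial\ovli z_k\|^2_\varphi+\|v\|^2_\varphi)$. Applying this to $v=\chi u_j$ with a cutoff $\chi\equiv 1$ on $B_R$, $\operatorname{supp}\chi\subset B_{R+1}$, and absorbing the $\partial\chi$-terms into $\|u\|^2_\varphi\le C\|u\|_{Q_\varphi}^2$ (valid by \eqref{coerc}), I obtain that $\{\chi u:\|u\|_{Q_\varphi}\le 1\}$ is bounded in the ordinary Sobolev space $W^1_{(0,1)}(B_{R+1})$, the weight being comparable to Lebesgue measure on the compact set $\ovli{B_{R+1}}$. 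The classical Rellich--Kondrachov lemma then renders this family precompact in $L^2(B_{R+1})$, hence $\{u|_{B_R}\}$ precompact in $L^2_\varphi(B_R)$.

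Combining uniformly small tails with local precompactness through a standard exhaustion/diagonal argument gives precompactness of $j_\varphi$ applied to the unit ball, i.e. compactness of $j_\varphi$. For the final assertion, recall that \eqref{coerc} makes $\Box_\varphi^{(0,1)}$ boundedly invertible with $\mathcal W^{Q_\varphi}=\mathrm{dom}((\Box_\varphi^{(0,1)})^{1/2})$ and $\|(\Box_\varphi^{(0,1)})^{1/2}u\|_\varphi=\|u\|_{Q_\varphi}$; thus $T:=(\Box_\varphi^{(0,1)})^{-1/2}$ is a bounded map $L^2_{(0,1)}(\mathbb C^n,e^{-\varphi})\to\mathcal W^{Q_\varphi}$, the composition $N_\varphi^{1/2}=j_\varphi\circ T$ is compact, and therefore $N_\varphi^{(0,1)}=(N_\varphi^{1/2})^2$ is compact.
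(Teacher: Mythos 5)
Your proposal is correct and follows essentially the same route as the paper: the unit ball of $\mathcal{W}^{Q_\varphi}$ is shown to have uniformly small mass near infinity via the lower bound $\|u\|_{Q_\varphi}^2\ge\int\mu_\varphi|u|^2e^{-\varphi}\,d\lambda$ coming from the Kohn--Morrey--H\"ormander identity, and this is combined with precompactness of the restrictions to balls $\mathbb B_R$. The only differences are cosmetic: you spell out the local step (recovering the holomorphic derivatives from the $\ovli z$-derivatives via the weighted integration-by-parts identity and a cutoff, then applying the classical Rellich--Kondrachov lemma) where the paper simply invokes ellipticity of $\ovprt\oplus\ovprt^*_\varphi$, and you conclude compactness of $N_\varphi^{(0,1)}$ by writing $N_\varphi^{1/2}=j_\varphi\circ(\Box_\varphi^{(0,1)})^{-1/2}$ rather than the paper's factorization $N_\varphi^{(0,1)}=j_\varphi\circ j_\varphi^*$.
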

This result can be seen as a Rellich Kondrachov lemma for Sobolev spaces defined by complex derivatives.
Notice that 
$$N_\varphi^{(0,1)} : L^2_{(0,1)}(\mathbb{C}^n , e^{-\varphi}) \longrightarrow L^2_{(0,1)}(\mathbb{C}^n , e^{-\varphi}) $$
can be written in the form 
$$N_\varphi^{(0,1)} = j_\varphi \circ j_\varphi ^* \ , $$
where 
$$j_\varphi ^* : L^2_{(0,1)}(\mathbb{C}^n , e^{-\varphi}) \longrightarrow \mathcal{W}^{Q_\varphi}$$
is the adjoint operator to $j_\varphi,$ see \cite{Has10} Section 6.2, or \cite{Str} Section 2.8. 

\vskip 0.3 cm

It is now clear that $N_\varphi^{(0,1)} $ is compact if and only if $j_\varphi$ is compact. 

We have to show that the unit ball in $\mathcal{W}^{Q_\varphi }$ is relatively compact in $ L^2_{(0,1)}(\mathbb{C}^n, e^{-\varphi} )$. For this purpose we use the characterization of compact subsets in $L^2$-spaces (see \cite{Has10} Chapter 11).

For $u\in \mathcal{W}^{Q_\varphi}$ we have 
$$ \| \ovprt u \|^2_\varphi + \| \ovprt_\varphi ^* u\|^2_\varphi \ge  ( M_{\varphi } u, u)_\varphi .$$

This implies
\begin{equation}\label{eq: compac}
 \| \ovprt u \|^2_\varphi + \| \ovprt_\varphi ^* u\|^2_\varphi \ge  \int_{\mathbb{C}^n}\mu_\varphi(z) \, |u(z)|^2\,  e^{-\varphi(z)}\,d\lambda (z) \ge \int_{\mathbb{C}^n \setminus \mathbb{B}_R} \mu_\varphi(z) |u(z)|^2   e^{-\varphi(z)}  d\lambda (z),
\end{equation}
 where $\mathbb{B}_R$ is the ball with center $0$ and radius $R>0.$

Consequently, assumption \eqref{eq: com10} implies that for each $\epsilon >0$ there is $R>0$ such that 

\begin{equation}\label{eq: com12}
\int_{\mathbb{C}^n \setminus \mathbb{B}_R} |u(z)|^2 e^{-\varphi (z)}\,d\lambda (z) < \epsilon,
 \end{equation}
for all $u$ in the unit ball of $\mathcal{W}^{Q_\varphi}.$ 
Also, the map $u \mapsto u|_{\mathbb B_R}$ is compact from $\mathcal{W}^{Q_\varphi }$ to $L^2_{(0,1)}(\mathbb B_R, e^{-\varphi} ),$ in view of the ellipticity of $\ovprt \oplus \ovprt^*_\varphi.$ Together with \eqref{eq: com12}, this latter fact shows that the image of a bounded set in $\mathcal{W}^{Q_\varphi }$ is pre-compact in $L^2_{(0,1)} (\mathbb C^n, e^{-\varphi}).$
\vskip 0.5 cm
In the following we will describe an approach to obtain the so-called compactness estimates for the $\dbar$-Neumann operator $N_\varphi^{(0,1)} ,$ where we follow  
\cite{Str}, Propostion 4.2.
For this purpose we have to define appropriate weighted Sobolev spaces and we need an appropriate Rellich - Kondrachov lemma.
\vskip 2 cm

\section{Weighted $L^2$-Sobolev spaces}
\vskip 2 cm 

Let $z=(z_1,\dots, z_n)=(x_1+iy_1, \dots , x_n+iy_n)\in \mathbb C^n$ and write for a multiindex 
$$\gamma = (\gamma_1, \gamma_2, \dots , \gamma_{2n-1}, \gamma_{2n})$$ and an appropriate function 
$$\partial^\gamma f= \frac{\partial^{|\gamma|}f}{\partial x_1^{\gamma_1} \partial y_1^{\gamma_2} \dots 
\partial x_n^{\gamma_{2n-1}} \partial y_n^{\gamma_{2n} }}.$$

\begin{definition}\label{sobo1}
We denote by
$W^k(\mathbb C^n)$
 the Sobolev space
$$W^k(\mathbb C^n) = \{ f\in L^2(\mathbb C^n ) \, : \, \partial^\gamma f \in L^2(\mathbb C^n ) ,  \, |\gamma |\le k \},$$
where the derivatives are taken in the sense of distributions and endow the space with the norm
\begin{equation*}
\|f\|_{k} = \left ( \sum_{|\gamma |\le k} \int_{\mathbb C^n} |\partial^\gamma f |^2 \,d\lambda \right )^{1/2}.
\end{equation*}
\end{definition}
$W^k(\mathbb C^n)$ is a Hilbert space. It is well-known that the embedding $\iota: W^1(\mathbb C^n) \hookrightarrow L^2(\mathbb C^n) $ fails to be compact. In sake of completeness we recall the easy proof: let $\psi \in \mathcal C^\infty_0(\mathbb C^n)$ be a smooth function with compact support such that ${\text{Tr}} \psi \subset B_{1/2}(0)$ and $\int_{\mathbb C^n}|\psi (z)|^2\,d\lambda (z)=1.$ For $k\in \mathbb{N}$ let $\psi_k(z)= \psi (z-\overrightarrow{k}),$ where $\overrightarrow{k} = (k, 0, \dots ,0) \in \mathbb C^n.$ Then ${\text{Tr}}\psi_k \subset 
B_{1}(\overrightarrow{k})$ and $(\psi_k)_k$ is a bounded sequence in $W^1(\mathbb C^n).$ Now let $k,m\in \mathbb N$ with $k\neq m.$ Due to the fact that $\psi_k$ and $\psi_m$ have non-overlapping supports we have
$$\| \psi_k - \psi_m \|^2 = \| \psi_k \|^2 + \| \psi_m \|^2 = 2,$$
and the sequence $(\psi_k)_k$ has no convergent subsequence in $L^2(\mathbb C^n).$

Let $U_\varphi : L^2(\mathbb C^n ) \longrightarrow L^2(\mathbb C^n, e^{-\varphi} )$ denote the isometry given by $U_\varphi (f)= f e^{\varphi/2},$ for $f\in L^2(\mathbb C^n ).$ The inverse is given by $U_{-\varphi} (g) = ge^{-\varphi/2},$ for $ g\in L^2(\mathbb C^n, e^{-\varphi} ).$
The appropriate weighted Sobolev spaces are determined as the images of $W^k(\mathbb C^n)$ under the isometry $U_\varphi.$ In the following we consider only Sobolev spaces of order $1.$ Let $f\in W^1(\mathbb C^n).$ Then $fe^{\varphi/2} , (\partial_jf)e^{\varphi/2} \in L^2(\mathbb C^n, e^{-\varphi}), $ where $\partial_j f$ denotes all first order derivatives of $f$ with respect to $x_j$ and $y_j$ for $j=1,\dots,n.$ Set $h=fe^{\varphi/2}.$ Then
\begin{align*}
\partial_j h& = (\partial_jf)e^{\varphi/2}+\frac{1}{2} f (\partial_j\varphi) e^{\varphi/2} \\
& = (\partial_jf)e^{\varphi/2} +\frac{1}{2} (\partial_j\varphi) h,
\end{align*}
which implies $(\partial_jf)e^{\varphi/2} = \partial_j h - \frac{1}{2} (\partial_j\varphi) h$ and
\vskip 0.3 cm
$U_\varphi (W^1(\mathbb C^n))= \{ h\in L^2(\mathbb C^n, e^{-\varphi}): \partial_j h - \frac{1}{2} (\partial_j\varphi) h \in L^2(\mathbb C^n, e^{-\varphi}), j=1, \dots ,2n \}.$
\vskip 0.3 cm
For reasons which will become clear later, we denote
$$W^1_0(\mathbb C^n, e^{-\varphi}):= U_\varphi (W^1(\mathbb C^n)),$$
and we endow the space $W^1_0(\mathbb C^n, e^{-\varphi})$ with the norm
$h \mapsto ( \|h\|^2_\varphi + \sum_j \|  \partial_j h - \frac{1}{2} (\partial_j\varphi) h \|^2_\varphi  )^{1/2}.$
\vskip 0.3 cm
in this way $U_\varphi : W^1(\mathbb C^n) \longrightarrow W^1_0(\mathbb C^n, e^{-\varphi})$ is again isometric and we have the following commutative diagram
\[ \begin{CD}
W^1(\mathbb C^n) @> \iota >> L^2(\mathbb C^n)  \\
@V{U_\varphi} VV @VV{U_\varphi}V \\
W^1_{0}(\mathbb C^n, e^{-\varphi}) @>>\iota_\varphi > L^2(\mathbb C^n, e^{-\varphi})
\end{CD} \]
where 
$\iota_\varphi : W^1_0(\mathbb C^n, e^{-\varphi}) \hookrightarrow L^2(\mathbb C^n, e^{-\varphi})$ 
is the canonical embeddings.
As $U_\varphi \, \iota = \iota_\varphi \, U_\varphi $ and $\iota $ fails to be compact, $\iota_\varphi $ is also not compact.

\begin{definition}\label{sobo2}
Let $\eta \in \mathbb R.$ We denote by
$W^1_{\eta}(\mathbb C^n, e^{-\varphi})$
 the Sobolev space
 \vskip 0.3 cm
$W^1_{\eta}(\mathbb C^n, e^{-\varphi}) = \{ h\in L^2(\mathbb C^n, e^{-\varphi} ) : \partial_j h - \frac{1+\eta}{2} (\partial_j\varphi) h \in L^2(\mathbb C^n, e^{-\varphi}), j=1, \dots ,2n \},$
\vskip 0.3 cm
endowed with the norm
$h \mapsto ( \|h\|^2_\varphi + \sum_j \|  \partial_j h - \frac{1+\eta}{2} (\partial_j\varphi) h \|^2_\varphi  )^{1/2}.$
\end{definition}
We use the notation 
$$X_j=\frac{\partial }{\partial x_j} - \frac{1+\eta}{2}\frac{\partial \varphi}{\partial x_j} \ {\text {and}} \ 
Y_j=\frac{\partial }{\partial y_j} -  \frac{1+\eta}{2}\frac{\partial \varphi}{\partial y_j},$$
for $j=1,\dots ,n.$ Then
$$W^1_\eta(\mathbb C^n, e^{-\varphi} )=\{ f\in L^2(\mathbb{C}^n, e^{-\varphi}) \ : X_jf,\ Y_jf \in  L^2(\mathbb{C}^n, e^{-\varphi}) , j=1,\dots ,n \},$$
with norm
$$\|f\|^2_{ \varphi , \eta}= \|f\|^2_{ \varphi }+\sum_{j=1}^n( \|X_jf\|^2_\varphi
+ \|Y_jf\|^2_\varphi) .$$

For suitable weight functions $\varphi,$ we can prove an analogous result to the Rellich Kondrachov lemma.

\begin{theorem}\label{rellich}
Suppose that $\varphi $ is a $\mathcal{C}^2$-function satisfying

\begin{equation}\label{eq: nowei}
\lim_{|z|\to \infty}(\eta^2 |\nabla \varphi (z)|^2+(1+\epsilon) \eta \, \triangle \varphi (z))= +\infty ,
\end{equation}
for some $\epsilon >0,$ where 
$$|\nabla \varphi (z)|^2= \sum_{k=1}^n \left ( \left | \frac{\partial \varphi}{\partial x_k}\right |^2+ \left | \frac{\partial \varphi}{\partial y_k}\right |^2 \right ).$$
Then the canonical embedding $\iota_{\varphi, \eta} :W^1_\eta (\mathbb C^n,e^{- \varphi})\hookrightarrow L^2(\mathbb{C}^n, e^{-\varphi}) $ is compact.
\end{theorem}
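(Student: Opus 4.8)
The plan is to follow the scheme already used for Theorem~\ref{sec: embed}: I reduce compactness of $\iota_{\varphi,\eta}$ to two facts, namely (i) a uniform tail estimate saying that for every $\delta>0$ there is $R>0$ with
\begin{equation*}
\int_{\mathbb C^n\setminus\mathbb B_R}|f|^2\,e^{-\varphi}\,d\lambda<\delta
\end{equation*}
for all $f$ in the unit ball of $W^1_\eta(\mathbb C^n,e^{-\varphi})$, and (ii) relative compactness of the restriction $f\mapsto f|_{\mathbb B_R}$ as a map into $L^2(\mathbb B_R,e^{-\varphi})$. Granting (i) and (ii), the image of the unit ball is pre-compact by exactly the covering argument given after \eqref{eq: com12}, so the whole problem is to extract (i) and (ii) from hypothesis~\eqref{eq: nowei}.

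The key step is a weighted Bochner-type identity for the $W^1_\eta$-norm. For a single real direction $t\in\{x_j,y_j\}$ put $X=\partial_t-\tfrac{1+\eta}{2}\varphi_t$ with $\varphi_t=\partial_t\varphi$; expanding $|Xf|^2$, using $\mathrm{Re}(\overline f\,\partial_t f)=\tfrac12\partial_t|f|^2$, and integrating the resulting cross term by parts against the weight $e^{-\varphi}$ (which converts $-\tfrac{1+\eta}{2}\varphi_t\,\partial_t|f|^2$ into $\tfrac{1+\eta}{2}(\varphi_{tt}-\varphi_t^2)|f|^2$) yields, after summing over all $2n$ real directions,
\begin{equation*}
\sum_{j=1}^n\bigl(\|X_jf\|_\varphi^2+\|Y_jf\|_\varphi^2\bigr)=\int_{\mathbb C^n}|\nabla f|^2e^{-\varphi}\,d\lambda+\int_{\mathbb C^n}\Bigl(\tfrac{1+\eta}{2}\triangle\varphi+\tfrac{\eta^2-1}{4}|\nabla\varphi|^2\Bigr)|f|^2e^{-\varphi}\,d\lambda .
\end{equation*}
Discarding the nonnegative Dirichlet term $\int|\nabla f|^2e^{-\varphi}$ gives the lower bound
\begin{equation*}
\|f\|_{\varphi,\eta}^2\ \ge\ \int_{\mathbb C^n}P(z)\,|f(z)|^2\,e^{-\varphi(z)}\,d\lambda(z),\qquad P:=\tfrac{1+\eta}{2}\triangle\varphi+\tfrac{\eta^2-1}{4}|\nabla\varphi|^2 ,
\end{equation*}
which is the analogue of \eqref{eq: compac}.

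From here I read off the tail estimate once one checks that \eqref{eq: nowei} forces $P(z)\to+\infty$ as $|z|\to\infty$: writing $P=\tfrac{\eta^2-1}{4}\bigl(|\nabla\varphi|^2+\tfrac{2}{\eta-1}\triangle\varphi\bigr)$ and normalizing, the factor $\eta^2$ and the coefficient $(1+\epsilon)$ in \eqref{eq: nowei} are precisely what is needed to pass from the explicit constants $\tfrac{1+\eta}{2},\tfrac{\eta^2-1}{4}$ to the stated growth condition, the parameter $\epsilon>0$ supplying the required margin (and pinning down the admissible range of $\eta$). Since $P$ need not be nonnegative near the origin, I choose $R_0$ with $P\ge0$ on $\mathbb C^n\setminus\mathbb B_{R_0}$ and set $C_0=\sup_{\mathbb B_{R_0}}|P|$; for $R\ge R_0$ and $f$ in the unit ball, splitting the integral and discarding the nonnegative contribution on $\mathbb B_R\setminus\mathbb B_{R_0}$ gives
\begin{equation*}
1\ \ge\ \|f\|_{\varphi,\eta}^2\ \ge\ \Bigl(\inf_{|z|\ge R}P\Bigr)\int_{\mathbb C^n\setminus\mathbb B_R}|f|^2e^{-\varphi}\,d\lambda-C_0 ,
\end{equation*}
so the tail is $\le(1+C_0)/\inf_{|z|\ge R}P\to0$ uniformly, which is (i). For (ii) I use that on the fixed ball $\mathbb B_R$ the weight $e^{-\varphi}$ and $|\nabla\varphi|$ are bounded above and below ($\varphi\in\mathcal C^2$); since $\partial_{x_j}f=X_jf+\tfrac{1+\eta}{2}\varphi_{x_j}f$, the norm $\|f\|_{\varphi,\eta}$ controls the ordinary $W^1(\mathbb B_R)$-norm, and the classical Rellich--Kondrachov lemma on the bounded domain $\mathbb B_R$ makes the restriction map compact. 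Combining (i) and (ii) yields compactness of $\iota_{\varphi,\eta}$.

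The main obstacle is twofold. The serious analytic point is the rigorous justification of the integration by parts over all of $\mathbb C^n$: one first proves the identity for $f\in\mathcal C_0^\infty(\mathbb C^n)$, controls the errors produced by a sequence $\chi_R$ of cut-offs, and then argues by density that the identity (or at least the lower bound) persists on all of $W^1_\eta$. The bookkeeping point is matching the explicit coefficients $\tfrac{1+\eta}{2}$ and $\tfrac{\eta^2-1}{4}$ to the normalized hypothesis~\eqref{eq: nowei}, i.e.\ checking that the sign conditions on $\eta$ make $P$ genuinely coercive at infinity. A convenient cross-check is the isometry $f=e^{\frac{1+\eta}{2}\varphi}u$, under which $\|f\|_\varphi^2=\int|u|^2e^{\eta\varphi}\,d\lambda$ and $\|X_jf\|_\varphi^2=\int|\partial_{x_j}u|^2e^{\eta\varphi}\,d\lambda$, so that $\iota_{\varphi,\eta}$ is unitarily equivalent to a standard weighted Sobolev embedding whose compactness is governed by the Schr\"odinger potential $\tfrac14\eta^2|\nabla\varphi|^2+\tfrac12\eta\triangle\varphi$, in agreement with \eqref{eq: nowei}.
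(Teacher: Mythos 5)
Your overall architecture (uniform tail estimate plus local Rellich--Kondrachov on a ball, as in the proof of Theorem \ref{sec: embed}) matches the paper's, and your Bochner-type identity
$$\sum_{j=1}^n(\|X_jf\|_\varphi^2+\|Y_jf\|_\varphi^2)=\int_{\mathbb C^n}|\nabla f|^2e^{-\varphi}\,d\lambda+\int_{\mathbb C^n} P\,|f|^2e^{-\varphi}\,d\lambda,\qquad P=\tfrac{1+\eta}{2}\triangle\varphi+\tfrac{\eta^2-1}{4}|\nabla\varphi|^2,$$
is correct for $f\in\mathcal C_0^\infty(\mathbb C^n)$. The gap is exactly in the step you dismiss as ``bookkeeping'': hypothesis \eqref{eq: nowei} does \emph{not} force $P(z)\to+\infty$. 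Indeed, $P$ coincides with $\Psi_\epsilon/(2+\epsilon+1/\epsilon)$, where $\Psi_\epsilon=\eta^2|\nabla\varphi|^2+(1+\epsilon)\eta\triangle\varphi$, only for the single value $\epsilon_0=\frac{\eta+1}{\eta-1}$, and this $\epsilon_0$ is positive only when $|\eta|>1$. Concretely: for $\eta=-1$ one has $P\equiv0$, while the theorem is asserted (and true) under $|\nabla\varphi|^2-(1+\epsilon)\triangle\varphi\to\infty$; for $\eta=1$ and $\varphi(z)=|z|^2$ one has $\Psi_\epsilon=4|z|^2+4n(1+\epsilon)\to\infty$, but $P=\triangle\varphi=4n$ is constant, so your bound $\|f\|_{\varphi,\eta}^2\ge\int P|f|^2e^{-\varphi}\,d\lambda$ gives no decay at infinity. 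The information you lose is precisely the Dirichlet term $\int|\nabla f|^2e^{-\varphi}\,d\lambda$ that you discard: the coercivity coming from $|\nabla\varphi|^2$ in $\Psi_\epsilon$ is partly stored there, and cannot in general be recovered from $P$ alone.

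The paper's proof produces the correct potential by a commutator argument that never throws that term away: from $(X_j+X_j^*)f=-\eta\,\varphi_{x_j}f$, the identity $\|X_j^*f\|_\varphi^2=\|X_jf\|_\varphi^2+([X_j,X_j^*]f,f)_\varphi$ and the bound $\|(X_j+X_j^*)f\|_\varphi^2\le(1+1/\epsilon)\|X_jf\|_\varphi^2+(1+\epsilon)\|X_j^*f\|_\varphi^2$ one gets $(\Psi_\epsilon f,f)_\varphi\le(2+\epsilon+1/\epsilon)\sum_j(\|X_jf\|_\varphi^2+\|Y_jf\|_\varphi^2)$ for \emph{every} $\epsilon>0$, which is exactly what the ``for some $\epsilon>0$'' in \eqref{eq: nowei} requires; the rest of the argument (splitting at radius $R$, classical Rellich--Kondrachov on $B(0,R)$) is as you describe. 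As written, your proposal proves the theorem only under the stronger hypothesis $P\to+\infty$, i.e.\ essentially only for $|\eta|>1$ and the particular $\epsilon=\frac{\eta+1}{\eta-1}$. Your local compactness step and the density/cut-off remarks are fine, and the unitary-equivalence cross-check at the end is a correct identity, but it again corresponds to a single value of $\epsilon$ (namely $\epsilon=1$) rather than to the full hypothesis.
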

\begin{proof}
We adapt methods from  \cite{BDH} , \cite{Jo} and \cite{KM} and use the general result that an operator between Hilbert spaces is compact if and only if the image of a weakly convergent sequence is strongly convergent. 

In addition we remark that $\mathcal C^\infty_0(\mathbb C^n)$ is dense in all spaces which are involved.
For the vector fields $X_j$  and their adjoints $X_j^*$ in the weighted space $L^2(\mathbb C^n,e^{-\varphi})$ we have $X_j^*=-\frac{\partial}{\partial x_j} + \frac{1-\eta}{2}\frac{\partial \varphi}{\partial x_j}$ and
\begin{equation}\label{eq: expl}
(X_j+X^*_j)f=-\eta \frac{\partial \varphi}{\partial x_j}\, f \ {\text{and}} \ 
[X_j,X^*_j]f= -\eta \frac{\partial^2\varphi}{\partial x_j^2}\, f,
\end{equation}
for $f\in  \mathcal{C}^\infty_0(\mathbb{C}^n),$ and
\begin{equation}\label{eq: comsob1}
( [X_j,X^*_j]f,f )_\varphi=\|X^*_jf\|^2_\varphi - \|X_jf\|^2_\varphi ,
\end{equation}
\begin{equation}\label{eq: comsob2}
\|(X_j+X^*_j)f\|^2_\varphi \le (1+1/\epsilon)\|X_jf\|^2_\varphi + (1+\epsilon)\|X^*_jf\|^2_\varphi 
\end{equation}
for each $\epsilon>0,$ where we used the inequality
$$|a+b|^2 \le |a|^2 + |b|^2 + 1/\epsilon \, |a|^2 + \epsilon \, |b|^2.$$
Similar relations hold for the vector fields $Y_j.$
Now we set
$$\Psi (z)=\eta^2|\nabla \varphi (z)|^2+(1+\epsilon)\eta \triangle \varphi (z).$$

By \eqref{eq: expl}, \eqref{eq: comsob1} and \eqref{eq: comsob2}, it follows that
$$( \Psi f,f )_\varphi \le 
(2+\epsilon +1/\epsilon)\sum_{j=1}^n( \|X_jf\|^2_\varphi
+ \|Y_jf\|^2_\varphi) .$$
Since $\mathcal{C}^\infty_0(\mathbb{C}^n)$ is dense in $W^1_\eta(\mathbb C^n, e^{-\varphi})$ by definition, this inequality holds for all $f\in W^1_\eta(\mathbb C^n, e^{-\varphi}).$

If $(f_k)_k$ is a sequence in $W^1_\eta(\mathbb C^n, e^{-\varphi})$ converging weakly to $0,$ then $(f_k)_k$ is a bounded sequence in $W^1_\eta(\mathbb C^n, e^{-\varphi})$ and our  assumption implies that 
$$\Psi (z)=\eta^2 |\nabla \varphi (z)|^2+(1+\epsilon) \eta \triangle \varphi (z)$$
is positive in a neighborhood of $\infty $. So we obtain
\begin{eqnarray*}
 \int\limits_{\mathbb{C}^n}|f_k(z)|^2e^{-\varphi (z)}\,d\lambda (z) & \le &
 \int\limits_{|z|< R}|f_k(z)|^2e^{-\varphi (z)}\,d\lambda (z)\\
 & + &
 \int\limits_{|z|\ge R} \frac{\Psi (z) |f_k(z)|^2}{\inf \{\Psi (z) \, : \, |z|\ge R\}} \, e^{-\varphi (z)}\,d\lambda (z)\\
 &\le & C_{\varphi , R}\, \|f_k\|^2_{L^2(B(0,R))}+ \frac{C_\epsilon \, \|f_k\|^2_{\varphi, \eta}}{\inf \{\Psi (z) \, : \, |z|\ge R\}}.
\end{eqnarray*}
Notice that in the last estimate the expression $\Psi (z)$ plays a similar role as $\mu_{\varphi} (z)$ in \eqref{eq: compac}.
It is now easily seen that the sequence $(f_k)_k$ converges also weakly to zero in $W^1(B(0,R)).$
Hence the assumption and the fact that the embedding
$$W^1(B(0,R)) \hookrightarrow L^2(B(0,R))$$
 is compact (classical Rellich Kondrachov Lemma, see for instance \cite{AF}) show that $(f_k)_k$ tends to $0$ in $L^2(\mathbb{C}^n, e^{-\varphi}).$

\end{proof}
\begin{rem}
If $\eta =0,$ we get the case corresponding to $W^1(\mathbb C^n), $
whereas $\eta =-1$ corresponds to the Sobolev space of all functions $h\in L^2(\mathbb C^n, e^{-\varphi})$ such that all derivatives of order $1$ satisfy $\partial_j h \in  L^2(\mathbb C^n, e^{-\varphi});$
in this case the higher order Sobolev spaces are defined as the spaces of all functions $h\in L^2(\mathbb C^n, e^{-\varphi})$ such that all derivatives of order $k\ge 1$ belong to $L^2(\mathbb C^n, e^{-\varphi}).$
\end{rem}

From Theorem \ref{rellich} we can also derive compactness for embeddings in Sobolev spaces without weights.
For this purpose we define

\begin{definition}\label{sobo7}
Let $\eta \in \mathbb R.$ We define

$ W^1_\eta (\mathbb C^n, \nabla \varphi) := \{ f\in L^2(\mathbb C^n) : \partial_j f - \frac{\eta}{2}(\partial_j \varphi)f \in L^2(\mathbb C^n), j=1, \dots, 2n\}.$
\end{definition}
Then $U_\varphi : W^1_\eta (\mathbb C^n, \nabla \varphi) \longrightarrow W^1_\eta (\mathbb C^n,e^{- \varphi})$ is an isometry. We consider the canonical embedding $\iota_\eta : W^1_\eta (\mathbb C^n, \nabla \varphi) \hookrightarrow  L^2(\mathbb C^n)$ and we have the following commutative diagram
\[ \begin{CD}
 W^1_\eta (\mathbb C^n, \nabla \varphi) @> \iota_\eta >> L^2(\mathbb C^n)  \\
@V{U_\varphi} VV @VV{U_\varphi}V \\
W^1_{\eta}(\mathbb C^n, e^{-\varphi}) @>>\iota_{\varphi, \eta} > L^2(\mathbb C^n, e^{-\varphi})
\end{CD} \]
Hence the condition \eqref{eq: nowei} implies that the canonical embedding 
$\iota_\eta: W^1_\eta (\mathbb C^n, \nabla \varphi) \hookrightarrow L^2(\mathbb C^n)$
is compact.

\vskip 0.4 cm
Now we return to compactness of the $\dbar$-Neumann operator $N_\varphi^{(0,1)}.$ We consider the weighted Sobolev space 

$W^1_{1}(\mathbb C^n, e^{-\varphi}) = \{ h\in L^2(\mathbb C^n, e^{-\varphi} ) : \partial_j h - (\partial_j\varphi) h \in L^2(\mathbb C^n, e^{-\varphi}), j=1, \dots ,2n \},$

and use 
$$X_j=\frac{\partial }{\partial x_j} - \frac{\partial \varphi}{\partial x_j} \ {\text {and}} \ 
Y_j=\frac{\partial }{\partial y_j} - \frac{\partial \varphi}{\partial y_j},$$
for $j=1,\dots ,n.$ Then
$$W^1_1(\mathbb C^n, e^{-\varphi} )=\{ f\in L^2(\mathbb{C}^n, e^{-\varphi}) \ : X_jf,\ Y_jf \in  L^2(\mathbb{C}^n, e^{-\varphi}) , j=1,\dots ,n \},$$
with norm
$$\|f\|^2_{ \varphi , 1}= \|f\|^2_{ \varphi }+\sum_{j=1}^n( \|X_jf\|^2_\varphi
+ \|Y_jf\|^2_\varphi) .$$

We point out that  each continuous linear functional $L$ on $W^1_1(\mathbb C^n, e^{-\varphi} )$ is represented  by
$$L(f) = \int_{\mathbb{C}^n} fg_0e^{-\varphi}\,d\lambda +
\sum_{j=1}^n   \int_{\mathbb{C}^n}((X_jf)g_j+(Y_jf)h_j)e^{-\varphi }\,d\lambda,$$
for $f\in W^1_1(\mathbb C^n, e^{-\varphi} )$ and for some $g_0,g_j,h_j\in L^2(\mathbb{C}^n, e^{-\varphi}), \, j=1,\dots,n.$ In particular, each function in $L^2(\mathbb C^n, e^{-\varphi})$ can be identified with an element
of the dual space $(W^1_1(\mathbb C^n, e^{-\varphi} ))' =: W^{-1}_1(\mathbb C^n, e^{-\varphi} ).$ We denote the norm in $ W^{-1}_1(\mathbb C^n, e^{-\varphi} )$ by $\| \, . \, \|_{\varphi, -1}.$ See \cite{Has10} Chapter 11,  for more details.

If we suppose that  $\varphi $ is a $\mathcal{C}^2$-function satisfying

\begin{equation}\label{eq: nowei1}
\lim_{|z|\to \infty}( |\nabla \varphi (z)|^2+(1+\epsilon) \, \triangle \varphi (z))= +\infty ,
\end{equation}
for some $\epsilon >0,$ then the embedding 
$$L^2_{(0,1)}(\mathbb C^n, e^{-\varphi}) \hookrightarrow  W^{-1}_{1,(0,1)}(\mathbb C^n, e^{-\varphi} )$$
is compact  by Theorem \ref{rellich} and duality. So, as in \cite{Str}, Proposition 4.2 or \cite{Has10}, Proposition 11.20, we get the compactness estimates

\begin{theorem}\label{sec: compact}~\\ 
Suppose that the weight function $\varphi$  satisfies \eqref{perss} and
$$\lim_{|z|\to \infty}( |\nabla \varphi (z)|^2+(1+\epsilon) \, \triangle \varphi (z))= +\infty ,$$
for some $\epsilon >0,$ then the following statements are equivalent.
\begin{enumerate}
  \item The $\ovprt $-Neumann operator $N_{\varphi}^{(0,1)}$ is a compact operator from $L_{(0,1)}^2(\mathbb{C}^n, e^{-\varphi})$ into itself.
  \item The embedding of the space dom\,$(\ovprt )\,\cap$
dom\,$(\ovprt_\varphi^*),$ provided with the graph norm $u\mapsto (\|u\|^2_\varphi + \|\ovprt u\|^2_\varphi + 
\|\ovprt_\varphi ^*u\|^2_\varphi)^{1/2},$ into $L^2_{(0,1)}(\mathbb{C}^n, e^{-\varphi})$ is compact.
  \item For every positive $\epsilon' $ there exists a constant $C_{\epsilon'}$ such that
  $$\|u\|_\varphi ^2 \le \epsilon' (\| \ovprt u \|_\varphi ^2 + \| \ovprt_\varphi ^*u\|_\varphi ^2) + C_{\epsilon'} \|u\|_{\varphi , -1} ^2,$$
  for all $u\in$ dom\,$(\ovprt )\,\cap\,$dom\,$(\ovprt_\varphi^*).$
\item  For every positive $\epsilon'$ there exists $R>0$ such that 
\begin{equation*} 
 \int_{\mathbb{C}^n \setminus \mathbb{B}_R} |u(z)|^2\, e^{-\varphi(z)}\,d\lambda(z) \le \epsilon' ( \|\ovprt u\|_\varphi ^2 + \| \ovprt_\varphi^*  u\|_\varphi ^2)
\end{equation*}
for all $u\in dom\,(\ovprt) \,\cap dom\,(\ovprt_\varphi^*). $

\item The operators
$$\ovprt_\varphi ^* N_{\varphi}^{(0,1)} : L_{(0,1)}^2(\mathbb{C}^n, e^{-\varphi})\cap {\text {ker}}(\ovprt) \longrightarrow L^2(\mathbb{C}^n, e^{-\varphi}) \ \ {\text {and}}$$
$$\ovprt_\varphi ^* N_{\varphi}^{(0,2)} : L_{(0,2)}^2(\mathbb{C}^n, e^{-\varphi})\cap {\text {ker}}(\ovprt) \longrightarrow L_{(0,1)}^2(\mathbb{C}^n, e^{-\varphi})$$
are both compact.
\end{enumerate}
\end{theorem}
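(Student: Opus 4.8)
The plan is to prove the metric characterizations (1)--(4) together by functional analysis, exploiting the factorization $N_\varphi^{(0,1)}=j_\varphi\circ j_\varphi^*$, and to treat (5) separately via Kohn's commutation relations, broadly following \cite{Str}, Proposition 4.2 and \cite{Has10}, Proposition 11.20. Write $W:=\mathrm{dom}(\ovprt)\cap\mathrm{dom}(\ovprt^*_\varphi)$. For \emph{(1)$\Leftrightarrow$(2)} I note that under \eqref{perss} the basic estimate \eqref{coerc} makes the graph norm $(\|u\|^2_\varphi+\|\ovprt u\|^2_\varphi+\|\ovprt^*_\varphi u\|^2_\varphi)^{1/2}$ equivalent to the form norm $\|u\|_{Q_\varphi}$, so the embedding in (2) is compact exactly when $j_\varphi$ is, i.e. exactly when $N_\varphi^{(0,1)}$ is compact. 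For \emph{(2)$\Leftrightarrow$(3)} I would use the triple $W\hookrightarrow L^2_{(0,1)}(\mathbb C^n,e^{-\varphi})\hookrightarrow W^{-1}_{1,(0,1)}(\mathbb C^n,e^{-\varphi})$. The direction (2)$\Rightarrow$(3) is Ehrling's lemma: the first embedding is compact and the second is continuous and injective, which yields the estimate. For the converse (3)$\Rightarrow$(2) I take a bounded sequence $(u_k)$ in $W$, pass to a subsequence converging weakly in $W$ and hence weakly in $L^2$, and use that $L^2_{(0,1)}(\mathbb C^n,e^{-\varphi})\hookrightarrow W^{-1}_{1,(0,1)}(\mathbb C^n,e^{-\varphi})$ is \emph{compact} (Theorem \ref{rellich} with \eqref{eq: nowei1} and duality) to get $\|u_k-u_l\|_{\varphi,-1}\to 0$; inserting $u_k-u_l$ into (3) bounds $\|u_k-u_l\|^2_\varphi$ by $\epsilon'\cdot\mathrm{const}+C_{\epsilon'}\|u_k-u_l\|^2_{\varphi,-1}$, and letting $k,l\to\infty$ and then $\epsilon'\to0$ shows $(u_k)$ is Cauchy in $L^2$.

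For \emph{(2)$\Leftrightarrow$(4)} I would argue as in the proof of Theorem \ref{sec: embed}, using the characterization of precompact subsets of $L^2$ (see \cite{Has10}, Chapter 11): the restriction $u\mapsto u|_{\mathbb B_R}$ is compact from $W$ to $L^2_{(0,1)}(\mathbb B_R,e^{-\varphi})$ by interior ellipticity of $\ovprt\oplus\ovprt^*_\varphi$, so precompactness of the unit ball of $W$ in $L^2_{(0,1)}(\mathbb C^n,e^{-\varphi})$ is equivalent to the uniform ``equi-small tail'' bound. Since (4) is homogeneous, it says precisely that on the unit ball of the form norm the exterior integral $\int_{\mathbb C^n\setminus\mathbb B_R}|u|^2e^{-\varphi}\,d\lambda$ can be made $\le\epsilon'$; thus (4)$\Rightarrow$(2) follows directly, and (2)$\Rightarrow$(4) by a contradiction argument (a sequence violating the tail bound has no $L^2$-convergent subsequence).

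For \emph{(1)$\Leftrightarrow$(5)} I would use the self-adjoint square root of $\boxphi$ and the commutation relations $\ovprt N_\varphi^{(0,1)}=N_\varphi^{(0,2)}\ovprt$ and $\ovprt^*_\varphi N_\varphi^{(0,2)}=N_\varphi^{(0,1)}\ovprt^*_\varphi$; note that \eqref{perss} also makes $N_\varphi^{(0,2)}$ bounded, as the sum of the two smallest eigenvalues of $M_\varphi$ inherits the positive $\liminf$. The starting point is the identity
\begin{equation*}
N_\varphi^{(0,1)}=(\ovprt^*_\varphi N_\varphi^{(0,1)})^*(\ovprt^*_\varphi N_\varphi^{(0,1)})+(\ovprt^*_\varphi N_\varphi^{(0,2)})(\ovprt^*_\varphi N_\varphi^{(0,2)})^*,
\end{equation*}
which follows from $(N_\varphi^{(0,1)}u,u)_\varphi=\|\ovprt^*_\varphi N_\varphi^{(0,1)}u\|^2_\varphi+\|\ovprt N_\varphi^{(0,1)}u\|^2_\varphi$ together with $(\ovprt^*_\varphi N_\varphi^{(0,2)})^*=\ovprt N_\varphi^{(0,1)}$ and the orthogonality of the ranges of $\ovprt$ and $\ovprt^*_\varphi$. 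Then (5)$\Rightarrow$(1) is immediate. Conversely, boundedness of $\ovprt^*_\varphi(N_\varphi^{(0,1)})^{1/2}$ (its quadratic form is $\le\|\cdot\|^2_\varphi$) shows that $\ovprt^*_\varphi N_\varphi^{(0,1)}=\ovprt^*_\varphi(N_\varphi^{(0,1)})^{1/2}\cdot(N_\varphi^{(0,1)})^{1/2}$ is compact when (1) holds, and then the identity forces $(\ovprt^*_\varphi N_\varphi^{(0,2)})(\ovprt^*_\varphi N_\varphi^{(0,2)})^*$ to be compact, whence $\ovprt^*_\varphi N_\varphi^{(0,2)}$ is compact.

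The step I expect to be the main obstacle is the converse \emph{(3)$\Rightarrow$(2)} (equivalently, that (4) forces compactness): on the unbounded space $\mathbb C^n$ there is no a priori compact $L^2$-embedding, so everything hinges on replacing the classical Rellich lemma by Theorem \ref{rellich} and its dual to obtain compactness of $L^2_{(0,1)}(\mathbb C^n,e^{-\varphi})\hookrightarrow W^{-1}_{1,(0,1)}(\mathbb C^n,e^{-\varphi})$; checking that $\|\cdot\|_{\varphi,-1}$ is exactly the negative norm for which this duality and the estimate in (3) are compatible is the delicate point. A secondary difficulty is the degree-$(0,2)$ bookkeeping in (1)$\Leftrightarrow$(5), where one must ensure that $N_\varphi^{(0,2)}$ is well defined and that the commutation relations hold in the weighted setting.
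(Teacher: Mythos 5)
Your proposal is correct and follows essentially the same route as the paper, which itself only sketches the argument by observing that condition \eqref{eq: nowei1} together with Theorem \ref{rellich} and duality makes the embedding $L^2_{(0,1)}(\mathbb C^n, e^{-\varphi}) \hookrightarrow W^{-1}_{1,(0,1)}(\mathbb C^n, e^{-\varphi})$ compact and then invoking the standard chain of implications from \cite{Str}, Proposition 4.2, and \cite{Has10}, Proposition 11.20. You correctly identify that compact embedding as the point where the weighted Rellich--Kondrachov lemma replaces the classical one, and the remaining steps (Ehrling's lemma, interior ellipticity, the operator identity for $N_\varphi^{(0,1)}$ and the $(0,2)$-level bookkeeping) are the same standard ingredients the cited proofs use.
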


\begin{rem}
If
$$\lim_{|z|\rightarrow \infty}\mu_\varphi(z)  = +\infty,$$
then the condition of the Rellich-Kondrachov lemma \eqref{eq: nowei1} is satisfied.

This follows from the fact that we have for the trace ${\text {tr}}(M_\varphi ) $ of the Levi - matrix 
$${\text {tr}}(M_\varphi )=\frac{1}{4}\triangle \varphi, $$
and since for any invertible $(n\times n)$-matrix $T$
$$ {\text {tr}}(M_\varphi )={\text {tr}}(TM_\varphi T^{-1}),$$
it follows that ${\text {tr}}(M_\varphi )$ equals the sum of all eigenvalues of $M_\varphi .$

We mention that for the weight $\varphi (z)=|z|^2$ the $\dquer $-Neumann operator fails to be compact (see \cite{Has10} Chapter 15), but  condition \eqref{eq: nowei1} is
satisfied.

In view of Theorem \ref{rellich} it is  clear that for any weight satisfying \eqref{perss} and \eqref{eq: nowei} for $\eta \in \mathbb R, \eta \neq 0,$ and for some $\epsilon >0,$ the restriction of the $\dbar$-Neumann operator $N_{\varphi}^{(0,1)} $ to 
$W^1_{\eta, (0,1)} (\mathbb C^n,e^{- \varphi})$ is compact as an operator from $W^1_{\eta, (0,1)} (\mathbb C^n,e^{- \varphi})$ to $L^2_{(0,1)}(\mathbb C^n, e^{-\varphi}).$
\end{rem}

 \vskip 1 cm
ACKNOWLEDGMENT: The author wishes to thank the referee for the valuable comments.

\vskip 1 cm

\bibliographystyle{amsplain}
\bibliography{mybibliography}

\providecommand{\bysame}{\leavevmode\hbox to3em{\hrulefill}\thinspace}
\providecommand{\MR}{\relax\ifhmode\unskip\space\fi MR }
\providecommand{\MRhref}[2]{%
  \href{http://www.ams.org/mathscinet-getitem?mr=#1}{#2}
}
\providecommand{\href}[2]{#2}
\begin{thebibliography}{1}

\bibitem{AF}
R.A. Adams and J.J.F. Fournier, \emph{Sobolev spaces}, Pure and Applied Math.,
  vol. 140, Academic Press, 2006.

\bibitem{BDH}
P.~Bolley, M.~Dauge, and B.~Helffer, \emph{{Conditions suffisantes pour
  l'injection compacte d'espace de Sobolev \`a poids }}, S\'eminaire \'equation
  aux d\'eriv\'ees partielles (France), Universit\'e de Nantes \textbf{1}
  (1989), 1--14.

\bibitem{Dav}
E.B. Davies, \emph{Spectral theory and differential operators}, Cambridge
  studies in advanced mathematics, vol.~42, Cambridge University Press,
  Cambridge, 1995.

\bibitem{Has10}
F.~Haslinger, \emph{{The $\ovprt$-Neumann problem and Schr\"odinger
  operators}}, de Gruyter Expositions in Mathematics 59, Walter De Gruyter,
  2014.

\bibitem{HaHe}
F.~Haslinger and B.~Helffer, \emph{{ Compactness of the solution operator to
  $\ovprt $ in weighted $L^ 2$ - spaces}}, J. of Functional Analysis
  \textbf{243} (2007), 679--697.

\bibitem{Jo}
J.~Johnsen, \emph{{On the spectral properties of Witten Laplacians, their range
  projections and Brascamp-Lieb's inequality }}, Integral Equations Operator
  Theory \textbf{36} (2000), 288--324.

\bibitem{KM}
J.-M. Kneib and F.~Mignot, \emph{{Equation de Schmoluchowski g\'en\'eralis\'ee
  }}, Ann. Math. Pura Appl. (IV) \textbf{167} (1994), 257--298.

\bibitem{KN}
J.~Kohn and L.~Nirenberg, \emph{{ Non-coercive boundary value problems}}, Comm.
  Pure and Appl. Math. \textbf{18} (1965), 443--492.

\bibitem{Str}
E.~Straube, \emph{{The $L^2$-Sobolev theory of the $\ovprt $-Neumann problem
  }}, ESI Lectures in Mathematics and Physics, EMS, 2010.

\end{thebibliography}

\end{document}